\def\cal#1{\mathcal{#1}}
\newcommand{\comment}[1]{}
\def\indn#1{\{#1_n\}_{n\in\N}}
\newcommand{\proba}{\mathbb P}
\renewcommand{\P}{\mathbb P}
\newcommand{\esp}{{\mathbb E}}
\newcommand{\inv}{^{-1}}
\newcommand{\argmax}{{\rm{argmax}}}
\newcommand{\eqnh}{\begin{eqnarray*}}
\newcommand{\eqne}{\end{eqnarray*}}
\newcommand{\eqnhn}{\begin{eqnarray}}
\newcommand{\eqnen}{\end{eqnarray}}
\newcommand{\equh}{\begin{equation}}
\newcommand{\eque}{\end{equation}}
\def\summ#1#2#3{\sum_{#1 = #2}^{#3}}
\def\prodd#1#2#3{\prod_{#1 = #2}^{#3}}
\def\sif#1#2{\sum_{#1=#2}^\infty}
\def\bveee#1#2#3{\bigvee_{#1=#2}^{#3}}
\newcommand{\eqd}{\stackrel{d}{=}}
\def\ccbb#1{\left\{#1\right\}}
\def\pp#1{\left(#1\right)}
\def\mmid{\;\middle\vert\;}
\def\vv#1{{\boldsymbol #1}}
\def\qmand{\quad\mbox{ and }\quad}
\def\mfa{\mbox{ for all }}
\def\wt#1{\widetilde{#1}}
\def\R{{\mathbb R}}
\def\N{{\mathbb N}}
\def\calM{\mathcal M}
\def\calN{\mathcal N}
\def\cal#1{\mathcal{#1}}
\def\PD{{\rm PD}}
\newtheorem{Thm}{Theorem}[section]
\newtheorem{Prop}[Thm]{Proposition}
\newtheorem{Coro}[Thm]{Corollary}
\theoremstyle{definition}
\newtheorem{Rem}[Thm]{Remark}
\newtheorem{Def}[Thm]{Definition}
\newtheorem{Example}[Thm]{Example}
\numberwithin{equation}{section}
\title[Exchangeable hitting partitions]{Exchangeable random partitions from max-infinitely-divisible distributions}
\date{\today}
\author{Stilian Stoev}\address{Stilian Stoev\\Department of Statistics, University of Michigan, Ann Arbor, 
\\439 W. Hall, 1085 S. University\\
 Ann Arbor, MI 48109-1107, USA.}\email{sstoev@umich.edu}
\author{Yizao Wang}\address{Yizao Wang\\Department of Mathematical Sciences\\University of Cincinnati\\2815 Commons Way\\Cincinnati, OH, 45221-0025, USA.}\email{yizao.wang@uc.edu}
\begin{document}\sloppy
\begin{abstract}
The hitting partitions are random partitions that arise from the investigation of so-called hitting scenarios of max-infinitely-divisible (max-i.d.)~distributions. We study a  class of max-i.d.~laws
with exchangeable hitting partitions obtained by size-biased sampling from the jumps of a L\'evy subordinator. We obtain explicit formulae for the distributions of these partitions 
in the case of the multivariate $\alpha$-logistic and another family of exchangeable max-i.d.\ distributions. Specifically, the hitting partitions for these two cases are shown to coincide 
with the well-known Poisson--Dirichlet 
 partitions
 ${\rm PD}(\alpha,0),\ \alpha\in (0,1)$ and ${\rm PD}(0,\theta),\ \theta>0$.
 \end{abstract}
\keywords{exchangeable random partition, multivariate max-infinitely-divisible distribution, Poisson--Dirichlet distribution, paintbox partition}
\subjclass[2010]{Primary, 60G70, 
  Secondary, 
   60C05. 
   }

\maketitle

\section{Introduction}

Recently there has been a renewed interest in the study of multivariate records in extreme value theory (e.g.~\citep{goldie89records,gnedin07chain,hashorva05multiple} and references therein), 
motivated especially by the latest advances on the so-called 
{\em hitting scenarios} for extremal events. 
The notion of a hitting scenario originated from the 
investigation of the conditional laws of max-stable processes \citep{wang11conditional,dombry13regular}.
In words, a max-stable process can be represented as the pointwise maximum of a family of infinite conditionally independent stochastic processes, and hitting scenarios are introduced to describe whether pointwise maxima at various locations are contributed by a single underlying stochastic process. This notion plays a crucial role in simulation methods for max-stable processes.
 Hitting scenarios also arise naturally in the expression of the likelihood for max-stable models \cite{stephenson05exploiting,wadsworth13new}, and 
  in random tessellations determined by max-stable processes \citep{dombry18random}.
The latest advances on hitting scenarios are motivated by their connections to concurrence probabilities, and the framework can be naturally stated in the language of random partitions \citep{dombry18multivariate,dombry17probabilities}.

Our focus here is on the probabilistic aspects of the {\em hitting partitions} of multivariate max-stable distributions, recently introduced in \citep{dombry17probabilities}. 
A hitting partition can be viewed, more generally, as a random partition derived from the hitting scenario of a max-stable process 
\citep{dehaan84spectral,wang10structure}. We focus however on finite-dimensional distributions most of the time for the sake of simplicity.
We begin with recalling the definition. First, let $\{\xi_\ell\}_{\ell\in\N}$ be a measurable enumeration of points of a Poisson point process on $\R_+$ with intensity $\nu_\alpha(dx) :=\alpha x^{-\alpha-1}dx$ for some $\alpha>0$, and let $\{\vv Y_\ell\}_{\ell\in\N}$ be i.i.d.~copies of certain non-negative random vector $\vv Y =(Y_1,\dots,Y_n)$ with finite $\alpha$-moments.  
Throughout, $\N = \{1,2,\dots\}$ and $\N_0 = \N\cup\{0\}$.
Then, it is well known that the vector
\equh\label{eq:frechet}
\vv Z \equiv (Z_1,\dots,Z_n) \equiv \pp{\bveee \ell1\infty  \xi_\ell Y_{\ell,1},\dots,\bveee \ell1\infty \xi_\ell Y_{\ell,n}}
\eque
has a multivariate max-stable $\alpha$-Fr\'echet distribution \citep{dehaan84spectral}.  Throughout, we write $\vee\equiv \max$. 
Recall that a random variable $Z$ is said to be $\alpha$-Fr\'echet if
\[
\P(Z \le x) = \exp(- \sigma^\alpha /x^{\alpha}),\ \quad\mbox{ for }x\in (0,\infty),
\]
with  scale parameter $\sigma>0$. The vector $\vv Z$ is max-stable with $\alpha$-Fr\'echet
marginals if all its non-negative max-linear combinations $\bveee k1na_kZ_k$ are $\alpha$-Fr\'echet distributed, for 
all $a_k\ge 0,\ k=1,\dots,n$.
The representation \eqref{eq:frechet} is an instance of the so-called Le Page-type 
series representations in the special case of the semi-group $(\mathbb R^d,\vee)$ 
 \citep{davydov08strictly}. 

Given a max-stable Fr\'echet random vector with the representation \eqref{eq:frechet}, the induced hitting partition is determined as follows. Set 
\[
\ell^*(k) := \argmax_{\ell\in\N} \ccbb{\xi_\ell Y_{\ell,k}}, k=1,\dots,n.
\]
Note that $\{\xi_\ell Y_{\ell,k},\ \ell\in \N\}$ is a simple Poisson process on $(0,\infty)$ and hence with probability one, $\ell^*(k)$ is uniquely determined for every $k$; we restrict ourselves to this event 
from now on.  

The hitting partition of $[n] \equiv \{1,\dots,n\}$, for $n\in\N$,
 denoted by $\Pi_n$, is the random partition of equivalence classes 
induced by the relation
\equh\label{eq:partition}
i\sim j\quad \mbox{ if and only if }\quad \ell^*(i) = \ell^*(j),\mfa  i,j\in[n].
\eque
Here, $i\sim j$ reads as $i$ and $j$ are in the same block of the partition. Recall that a partition of $[n]$ is a collection of disjoint sets, 
the union of which is $[n]$. 

Thus far, most of the research on the hitting partitions
 has focused on the so-called {\em concurrence probability}, that is, the probability of the event that the hitting partition $\Pi_n$ 
consists of a single block \citep{dombry17probabilities,dombry18multivariate}.  The concurrence probability has the following expression
\[
p(n)\equiv \proba\pp{\Pi_n = \{[n]\}} 
= \esp\pp{\frac1{\esp(  \bveee k1n (Y_k^*/Y_k )^\alpha \mid \vv Y)}},
\]
where ${\vv Y}^*$ is an independent copy of $\vv Y$. This result was established in  
\citep[Theorem 2]{dombry17probabilities} by 
using the Slyvniak--Mecke formula, and the same method in principle could yield
formulae for the entire probability distribution of the hitting partition (see, e.g.~\citep{dombry13regular}). 
The general expressions are however 
neither explicit nor intuitive.

The motivation of this paper is to study specific choices of $\vv Y$, where the induced hitting partition has an explicit probability mass function. 
Our starting point is an example from a very recent paper
 \citep[Example 3]{dombry17probabilities}, where $\alpha\in (0,1)$ and $\vv Y = (Y_1,\dots,Y_n)$ has i.i.d.~$1$-Fr\'echet 
components.  Then, the distribution of  $\vv Z = (Z_1,\dots,Z_n)$ in \eqref{eq:frechet} becomes 
multivariate $\alpha$-logistic 
(see Example \ref{example:logistic} below).  In this case, the 
concurrence probability 
has a simple-looking formula
\begin{equation}\label{e:concurrence} 
p(n) = \prodd k1{n-1}\left(1-\frac \alpha k\right) \equiv \frac{\prod_{k=1}^{n-1} (k - \alpha) }{(n-1)!}, 
n\in\N.
\end{equation}

In this paper, first we
 explain this formula by showing that the hitting partition in this case is the {\em exchangeable} random partition induced by the Poisson--Dirichlet distribution with parameters $(\alpha,0)$. 
Poisson--Dirichlet distributions and exchangeable random partitions are 
fundamental objects in combinatorial stochastic processes, with numerous applications, notably in nonparametric inference and population genetics  \citep{pitman06combinatorial,berestycki09recent}. 
An outstanding family of {\em exchangeable} random partitions are the ones induced by the Poisson--Dirichlet 
distribution with parameters $\alpha,\theta$, 
referred to as the $\PD(\alpha,\theta)$ partitions for short. 
The legitimate values of the parameters are $\alpha<0, \theta = -m\alpha$ for some $m\in\N$ or $\alpha\in[0,1],\theta>-\alpha$. 
For any selected partition of $[n]$ with block sizes $n_1,\dots,n_k$ (such that $n_1+\cdots+n_k = n, n_1,\dots,n_k\ge 1$), the probability of a $\PD(\alpha,\theta)$ taking the value of this partition equals
 \begin{equation}\label{e:PD_law}
 p_{\alpha,\theta}(n_1,\dots,n_k)  = \frac{(\theta+\alpha)_{k-1\uparrow\alpha}\prodd i1k(1-\alpha)_{n_i-1\uparrow 1}}{(\theta+1)_{n-1\uparrow \alpha}}, 
 \end{equation}
 where $(x)_{m\uparrow\alpha} := \prod_{k=0}^{m-1} (x+k\alpha).$  (See \citep[Theorem 3.2, Definition 3.3]{pitman06combinatorial}.)
The Poisson--Dirichlet random partitions are actually  {exchangeable random partitions of $\N$}, although we focus 
on their restriction to $[n]$ \eqref{e:PD_law} most of the time.
\begin{Prop}\label{prop:1}
 The hitting partition $\Pi_n$ of the $\alpha$-logistic max-stable model, $\alpha\in(0,1)$, is a $\PD(\alpha,0)$ partition. 
\end{Prop}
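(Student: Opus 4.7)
The plan is to condition on the Poisson point process $\{\xi_\ell\}$ and recognize $\Pi_n$ as Kingman's paintbox partition whose random frequencies are the normalized jumps of an $\alpha$-stable subordinator, which is the classical construction of $\PD(\alpha,0)$.

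First I would exploit the $1$-Fr\'echet structure of $\vv Y$ by writing $Y_{\ell,k} = 1/E_{\ell,k}$ with $\{E_{\ell,k}\}_{\ell,k}$ i.i.d.~standard exponentials, independent of $\{\xi_\ell\}$. Then
\[
\ell^*(k) = \argmax_\ell \xi_\ell Y_{\ell,k} = \argmin_\ell \frac{E_{\ell,k}}{\xi_\ell},
\]
which turns the hitting problem into one of competing exponentials with rates $\xi_\ell$. A crucial enabling fact is that for $\alpha\in(0,1)$ the total mass $S := \sum_\ell \xi_\ell$ is finite almost surely, since $\int_0^\infty (1\wedge x)\,\alpha x^{-\alpha-1}\,dx < \infty$.

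Next I would carry out the conditional calculation. Given $\{\xi_\ell\}_\ell$, the variables $E_{\ell,k}/\xi_\ell$ are independent exponentials with rates $\xi_\ell$, so by the standard competing-exponentials argument (valid precisely because $S<\infty$, so that $\min_{j\ne \ell} T_j$ is exponential with rate $S - \xi_\ell$),
\[
\P\pp{\ell^*(k)=\ell \mmid \{\xi_m\}_m} = \frac{\xi_\ell}{S}, \qquad \ell\in\N.
\]
Since the $\{E_{\ell,k}\}$ are independent across $k$, the indices $\ell^*(1),\dots,\ell^*(n)$ are conditionally i.i.d.~given $\{\xi_\ell\}$. Consequently, conditional on the random probability vector $(P_\ell)_\ell := (\xi_\ell/S)_\ell$, the partition $\Pi_n$ induced by $\ell^*(i)=\ell^*(j)$ is exactly Kingman's paintbox partition with frequencies $(P_\ell)_\ell$.

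Finally I would identify the law of the (ranked) frequencies as $\PD(\alpha,0)$. The enumerated points $\{\xi_\ell\}$ are the jumps on $[0,1]$ of an $\alpha$-stable subordinator with L\'evy measure $\alpha x^{-\alpha-1}\,dx$, and $S$ is the value of this subordinator at time $1$. By the classical theorem of Pitman and Yor, the ranked normalized jumps of an $\alpha$-stable subordinator follow $\PD(\alpha,0)$; composed with the paintbox representation this identifies $\Pi_n$ as a $\PD(\alpha,0)$ partition. The main obstacle is the countable competing-exponentials step, namely verifying that the argmin is a.s.~uniquely attained at index $\ell$ with probability $\xi_\ell/S$; this reduces to the a.s.~finiteness of $S$, which fails for $\alpha\ge 1$ and explains the role of the restriction $\alpha\in(0,1)$. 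The remaining ingredients (paintbox, Pitman--Yor identification of stable jumps) are classical.
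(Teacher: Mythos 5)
Your proof is correct and follows essentially the same route as the paper: condition on the Poisson points, show $\P(\ell^*(k)=\ell\mid\{\xi_m\})=\xi_\ell/S$ so that $\Pi_n$ is a paintbox partition with the normalized jumps as frequencies, and invoke the classical identification of the ranked normalized jumps of an $\alpha$-stable subordinator with $\PD(\alpha,0)$. The only cosmetic difference is that you compute the conditional hitting probability via $Y=1/E$ and competing exponentials, while the paper uses the equivalent max-stability identity $\bigvee_\ell a_\ell Y_\ell\eqd(\sum_\ell a_\ell)Y_1$ together with $\P(aY_1>bY_2)=a/(a+b)$.
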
 

The result follows essentially from the {\em paintbox representation} of exchangeable random partitions, to be reviewed in Section \ref{sec:maxid}, 
and the max-stability property of Fr\'echet 
distributions. By recognizing the random weights in the paintbox representation as the (normalized) jumps of an $\alpha$-stable subordinator, we obtain that 
the hitting partition is in fact the $\PD(\alpha,0)$ partition. Thus, our method is completely different from the one applied in \citep{dombry17probabilities}.

Moreover, it turns out that the same idea of the proof can be applied to a larger family of hitting partitions, associated with a
class of max-infinitely-divisible (max-i.d.) distributions \citep{resnick87extreme}.  The latter are obtained as  in \eqref{eq:frechet}, but by considering a general intensity measure 
$\nu$ for the Poisson process $\{\xi_\ell,\ \ell \in \mathbb \N\}$ while keeping the independent $\alpha$-Fr\'echet marks $Y_{\ell,k}$'s.  The resulting 
max-i.d.\ distributions will be referred to as {\em sub-Fr\'echet max-i.d.~distributions}.

Our main result is Theorem \ref{thm:1}, which 
establishes a paintbox representation of the hitting partitions for all sub-Fr\'echet max-i.d.~distributions: these hitting partitions are precisely the exchangeable random partitions obtained via {\em size-biased sampling} of jumps from 
a subordinator with L\'evy measure $\nu$ \citep[Chapter 4.1]{pitman06combinatorial}. This representation allows us to identify another class of non-max-stable
sub-Fr\'echet max-i.d.~distributions, whose hitting partitions have the $\PD(0,\theta)$ laws, for $\theta>0$.

{\em The paper is organized as follows.} In Section \ref{sec:maxid} 
we review the background on exchangeable random partitions and prove the main result Theorem \ref{thm:1}. In Section \ref{sec:fdd} we provide related results on the sub-Fr\'echet max-i.d.~distributions.

\section{Hitting partitions of sub-Fr\'echet max-i.d.~distributions}\label{sec:maxid}
We shall consider a multivariate max-i.d.~distribution with the following representation
\equh\label{eq:maxid}
(\zeta_1,\dots,\zeta_n)\equiv \pp{\bveee \ell1\infty J_\ell Y_{\ell,1},\dots,\bveee \ell1\infty J_\ell Y_{\ell,n}},
\eque
where 
$\vv J\equiv \{J_\ell\}_{\ell\in\N}$ is a Poisson point process on $\R_+$ with intensity measure $\nu$, and $\{\vv Y_\ell\}_{\ell\in\N}$ are i.i.d.~random vectors independent from $\vv J$, each $\vv Y_\ell = (Y_{\ell,1},\dots,Y_{\ell,n})$ is a collection of independent $1$-Fr\'echet random variables, with scale parameters $\vv\sigma = (\sigma_1,\dots,\sigma_n)\in(0,\infty)^n$. 
The values of $\vv\sigma$ shall not have any impact in most of the discussions until 
Section \ref{sec:fdd}.

 We assume throughout that 
\[
 \nu(\R_+) = \infty \qmand \int_0^\infty(1\wedge x)\nu(dx)<\infty.
\]
 This ensures that the Poisson process $\vv J$ has infinitely many points, and
\[
 J_*:=\sif\ell1 J_\ell<\infty \mbox{ a.s.}
\]
In particular, the random variables the $\zeta_i$'s in \eqref{eq:maxid} are finite, almost surely.  This readily follows from the
max-stability property of the $Y_{\ell,k}$'s. Namely, 
by
 the fact that, $\mfa a_\ell\ge 0,\ \ell\in\N$ with $\sif \ell1 a_\ell<\infty$, we have
\equh\label{eq:max-stability}
\bigvee_{\ell=1}^\infty a_\ell Y_\ell\eqd \pp{\summ \ell1\infty a_\ell} Y_1 
\eque
for i.i.d.~$1$-Fr\'echet random variables $\{Y_\ell\}_{\ell\in\N}$.

We name the max-i.d.~random vector $\vv \zeta = (\zeta_1,\dots,\zeta_n)$ in  
\eqref{eq:maxid}, as a {\em sub-Fr\'echet} max-i.d.~random vector {\em with L\'evy measure $\nu$.} The terminology is inspired by the corresponding sub-stable distributions \citep{samorodnitsky94stable} (see \eqref{eq:mixture} below).

\begin{Def} For all
$n\in\N$,
 set
 \equh\label{eq:ell*}
 \ell^*(k) := \argmax_{\ell\in\N}{\ccbb{J_\ell Y_{\ell,k}}}, k=1,\dots,n,
 \eque
 and define now the random partition $\Pi_n$ of $[n]$ for 
 $n\in\N$
  by \eqref{eq:partition} as before. We refer to the so-defined random partition as the {\em hitting partition} of the max-i.d.~distribution in \eqref{eq:maxid}.
\end{Def}

As before,  $\ell^*(k)$ in \eqref{eq:ell*} is uniquely defined with probability one. Indeed, for every pair $\ell, \ell'\in\N, \ell\ne\ell'$, it is easy to see that with probability one $J_\ell Y_{\ell,k} \ne J_{\ell'} Y_{\ell',k}$ by conditioning on the values of $J_\ell, J_{\ell'}$, as the law of $Y_{\ell,k}$ has no atom.

\begin{Rem}
Following \cite[Chapter 5]{resnick87extreme}, given a non-negative max-i.d.~vector, say $\vv\zeta = (\zeta_1,\dots,\zeta_n)$, 
there exists a unique measure 
$\mu$ on $(\R_+^n,{\cal B}(\R_+^n))$, known as the L\'evy (or exponent) measure of $\vv\zeta$, such that for all
$\vv x = (x_1,\dots,x_n)\in\R_+^n$,
\begin{equation}\label{e:max-id-spec-measure}
\P( \zeta_k\le x_k,\ k=1,\dots,n) = \exp\pp{ -\mu([\vv 0,\vv x]^c)}.
\end{equation}
Thus, taking a Poisson point process $\vv\Psi = \{\vv\Psi_\ell\}_{\ell\in\N}$ on $\R_+^n$ with mean measure $\mu$, 
we obtain the stochastic representation
\[
(\zeta_1,\dots,\zeta_n)\eqd \pp{\sup_{\ell\in\N}\Psi_{\ell,1},\dots,\sup_{\ell\in\N}\Psi_{\ell,n}}.
\]
The law of the hitting partition depends only on
the law of $\vv \Psi$, which in turn is uniquely 
determined by the law of $\vv \zeta$
as argued in \citep{dombry17probabilities}. 
However, our starting point \eqref{eq:maxid} is the assumption that the Poisson point process $\vv\Psi$ 
has the following specific representation
\[
\{\vv\Psi_\ell\}_{\ell\in\N} \eqd \{J_\ell \vv Y_{\ell}\}_{\ell\in\N}.
\]
Many different spectral representations of this form are possible. Here, we assume that one exists where the random vector $\vv Y$ has
independent 1-Fr\'echet components, which plays a crucial role in deriving formulae of random partitions. 
See Remark \ref{rem:Bernstein} for an alternative characterization of the exponent measure $\mu$  in this case.
\end{Rem}

 Our main result is to show that in this framework, the random partition $\Pi_n$ is exchangeable, and
  it has the same law as a paintbox partition (a.k.a.~partition generated by random sampling) with random weights 
 \equh\label{eq:Pell}
 P_\ell := \frac{J_\ell}{\sum_{\ell'=1}^\infty J_{\ell'}}, \ell\in\N \qmand P_0:=0.
 \eque
 We first review the background of the paintbox construction 
 \citep{berestycki09recent,pitman06combinatorial}. 
 Recall that a paintbox partition with respect to weight $\vv s = (s_0,s_1,\dots)$ 
 such that $s_0\ge 0, s_1\ge s_2\ge\cdots \ge s_n\ge \cdots\ge 0$ and $\sif k0 s_k = 1$, is a canonical way to obtain exchangeable random 
 partitions of $\N$ as follows. Let $\indn X$ be i.i.d.~sampling from $\N_0$ with distribution $\proba(X_1 = \ell) = s_\ell, \ell\in\N_0$. Color the
 set of natural numbers $\N =\{1,2,\dots\}$ as follows. If $X_i>0$, we paint $i$ in color $X_i$, otherwise, all $i$'s with $X_i=0$ are colored in different
 colors that are also different from all other colors used in the paintbox.  Thus $\N$ is partitioned into disjoint blocks by 
 different colors.  Formally, this partition
 is induced by the equivalence relation $i\sim j$ if $X_i = X_j>0$ for all $i,j\in\N$.  Notice that 
  every
 $i\in\N$ such that $X_i = 0$ forms a singleton block by 
 itself.

 It is well known and easy to see that the so-obtained partition of $\N$ is exchangeable. Moreover, Kingman's representation theorem 
 \citep[Theorem 2.2]{pitman06combinatorial} says that every exchangeable random partition of $\N$ can be obtained by a paintbox partition 
 with possibly random weights $\vv s$. In this case, conditionally on $\vv s$,  $\indn X$ are i.i.d.~with distribution 
 $\proba(X_1 = \ell\mid \vv s) = s_\ell, \ell\in\N_0$.
 
 Therefore, a convenient way to characterize the law of an infinite exchangeable partition is to identify 
 the random weights of the corresponding paintbox partition. 
Our discussions focus on finite partitions: if a finite partition can be obtained by a paintbox partition with a finite number of i.i.d.~samplings, it is clearly exchangeable, with the law determined by the weights, and we still refer to it as a paintbox partition. 
 \begin{Thm}\label{thm:1} For all
 $n\in\N$,
  the hitting partition $\Pi_n$ associated to \eqref{eq:maxid} is an exchangeable random partition
  of $[n]$, which has the same law as a paintbox partition with random weights $\{P_\ell\}_{\ell\in\N_0}$ given by \eqref{eq:Pell}.
 \end{Thm}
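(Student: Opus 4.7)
The plan is to condition on the Poisson point process $\vv J = \{J_\ell\}_{\ell\in\N}$ and exploit the max-stability of the $1$-Fr\'echet marks in order to reduce the identification of $\Pi_n$ to the elementary identity for the argmax of independent Fr\'echet variables. Once this reduction is carried out, the paintbox construction just reviewed yields both the claimed representation and the exchangeability of $\Pi_n$ essentially for free.

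First, for a fixed $k\in[n]$ and conditional on $\vv J$, I would compute the law of $\ell^*(k) = \argmax_{\ell\in\N} J_\ell Y_{\ell,k}$. Since $\{Y_{\ell,k}\}_{\ell\in\N}$ are i.i.d.\ $1$-Fr\'echet with scale $\sigma_k$, the variables $\{J_\ell Y_{\ell,k}\}_{\ell\in\N}$ are conditionally independent, with $J_\ell Y_{\ell,k}$ being $1$-Fr\'echet of scale $J_\ell\sigma_k$. Using the standard identity $\P(Y>Y') = a/(a+a')$ for independent $1$-Fr\'echet $Y,Y'$ with scales $a,a'$, together with \eqref{eq:max-stability} applied to $\max_{\ell\ne\ell_0} J_\ell Y_{\ell,k}$ (which is licensed by $J_*<\infty$ a.s.), I would derive
\[
\P\bigl(\ell^*(k)=\ell_0\,\bigm|\,\vv J\bigr) \;=\; \frac{J_{\ell_0}\sigma_k}{\sigma_k\sum_{\ell\ge 1}J_\ell} \;=\; \frac{J_{\ell_0}}{J_*} \;=\; P_{\ell_0}.
\]
Crucially, the scale $\sigma_k$ cancels, so this conditional law does not depend on $k$ and, in particular, gives no mass to the singleton-producing index $0$ (since $P_0 = 0$).

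Next, because the vectors $\vv Y_\ell$ are i.i.d.\ with independent components, for distinct $k,k'$ the arrays $\{Y_{\ell,k}\}_{\ell\in\N}$ and $\{Y_{\ell,k'}\}_{\ell\in\N}$ are independent, so conditional on $\vv J$ the indices $\ell^*(1),\dots,\ell^*(n)$ are i.i.d.\ with distribution $(P_\ell)_{\ell\in\N}$. The hitting partition, being the partition of $[n]$ induced by the equivalence relation $i\sim j \iff \ell^*(i)=\ell^*(j)$, is then by inspection exactly a paintbox partition with random weights $\{P_\ell\}_{\ell\in\N_0}$, from which exchangeability is automatic. I expect the only subtle point to lie in the first step, namely the rigorous passage from the two-variable Fr\'echet identity to a countable family; this should reduce to an approximation using only the first $N$ atoms of $\vv J$ and then letting $N\to\infty$, with the summability $J_*<\infty$ controlling the tail. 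Apart from this, the argument is essentially bookkeeping.
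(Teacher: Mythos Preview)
Your proposal is correct and follows essentially the same route as the paper: condition on $\vv J$, use the max-stability identity \eqref{eq:max-stability} together with $\P(aY_1>bY_2)=a/(a+b)$ for i.i.d.\ $1$-Fr\'echet variables to show $\P(\ell^*(k)=\ell_0\mid\vv J)=P_{\ell_0}$, and then invoke the conditional independence of the columns $\{Y_{\ell,k}\}_\ell$ to identify $\Pi_n$ as a paintbox partition. The paper's proof is no more and no less than this.
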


\begin{proof}
To see this, we first observe that conditioning on $\vv J$, for each $k=1,\dots,n$, the distribution of $\ell^*(k)$ is determined by   $\P(\ell^*(k) = \ell\mid\vv J), \ell\in\N$, 
and conditionally on $\vv J$, we have that $\{\ell^*(k)\}_{k=1,\dots,n}$ are independent, since so are the $Y_{\ell,k}$'s. 
The probability of interest turns out to be independent from $k$. Indeed, we have
\begin{align}
\nonumber
\P(\ell^*(k) = \ell\mid\vv J) &= \proba\pp{J_\ell Y_{\ell,k}>\max_{\ell'\ne\ell} J_{\ell'}Y_{\ell',k}\mmid\vv J} 
 \\ &
 = 
\proba\pp{J_\ell{Y_{1,k}}>\pp{\sum_{\ell'\ne\ell}J_{\ell'}}Y_{2,k}\mmid\vv J} 
 \label{e:Thm1-1}
 \\
\label{e:Thm1-2}
& = \frac{J_\ell}{\sum_{\ell'=1}^\infty J_{\ell'}} = P_\ell, \ell\in\N.
\end{align}
Relation \eqref{e:Thm1-1} follows from the max-stability property  \eqref{eq:max-stability} of the Fr\'echet distribution, while
Relation \eqref{e:Thm1-2} follows from the property 
$$
 \proba(aY_1>bY_2) = \frac{a}{a+b},
 $$
 valid for all $a,b>0$ where $Y_1$ and $Y_2$ are i.i.d.~1-Fr\'echet random variables.  
 This completes the proof.
 \end{proof}
 The aforementioned framework of exchangeable random partitions based on jump sizes of subordinators,  via 
 \eqref{eq:Pell},
  is well known.  In fact, 
{if} the L\'evy measure $\nu$ has a density $\rho$, then under mild conditions 
explicit formula for the random partition generalizing \eqref{e:PD_law} is available in terms of $\rho$. See \citep{pitman03poisson} and \citep[Exercise 4.1.2]{pitman06combinatorial}. To keep the presentation short, we shall instead explain only two special examples in full detail here.

Recall that the
 Poisson--Dirichlet distribution refers to a two-parameter family of {\em ranked frequencies} of $\{P_\ell\}_{\ell\in\N}$, indexed by $(\alpha,\theta)$ with either $\alpha<0, \theta = -m\alpha$ for some $m\in\N$, or $\alpha\in[0,1],\theta>-\alpha$. When the frequencies are ordered in {\em size-biased order}, the corresponding law of the same two-parameter family is known as the Griffiths--Engen--McCloskey (GEM) distribution. The size-biased frequencies, denoted by $\{\wt P_\ell\}_{\ell\in\N}$, have the representation
\[
(\wt P_1,\wt P_2,\wt P_3,\dots) \eqd (W_1,(1-W_1)W_2,(1-W_1)(1-W_2)W_3,\dots),
\]
where $\{W_\ell\}_{\ell\in\N}$ are independent beta random variables, each with parameters $(1-\alpha,\theta+\ell\alpha)$. 
Here, formally the size-biased frequencies are defined iteratively as follows: given the probability on $\N$ determined by $\{P_\ell\}_{\ell\in\N}$, consider a sequence of i.i.d.~sampling from this  probability, and let $\wt P_1$ denote the probability of the first label sampled,  $\wt P_2$ denote the second new label sampled, and so on.

In particular, explicit examples relating random weights from subordinators via \eqref{eq:Pell} to Poisson--Dirichlet distributions have been well known, and we shall make use 
of
 the following two from them (see \citep[Chapter 4.2]{pitman06combinatorial}): 
\begin{enumerate}[(i)]
\item If $\nu(dx) = \alpha x^{-\alpha-1}dx$, $\alpha\in(0,1)$, then the subordinator is an $\alpha$-stable subordinator. The ranked frequencies have the law of $\PD(\alpha,0)$. 
\item If $\nu(dx) = \theta x^{-1}e^{-x}dx$, $\theta>0$, then the subordinator is a Gamma process. The ranked frequencies  have the law of $\PD(0,\theta)$. 
\end{enumerate}
The next corollary follows immediately, including Proposition \ref{prop:1} as the first case.

\begin{Coro}\label{c:PD}
For the sub-Fr\'echet max-i.d.~distribution~\eqref{eq:maxid} with L\'evy measure $\nu$, the induced hitting partition $\Pi_n$ is:
\begin{enumerate}[(i)]
\item  $\PD(\alpha,0),\ \alpha \in (0,1)$  if $\nu(dw) =\alpha w^{-\alpha-1} dw$.
\item $\PD(0,\theta),\ \theta>0$  if $\nu(dw) = \theta w\inv e^{-w}dw$.
\end{enumerate}
\end{Coro}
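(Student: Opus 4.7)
The plan is to apply Theorem \ref{thm:1} directly and then invoke two classical identifications between normalized subordinator jumps and Poisson--Dirichlet frequencies.

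First, by Theorem \ref{thm:1}, the hitting partition $\Pi_n$ associated with the sub-Fr\'echet max-i.d.~vector \eqref{eq:maxid} has the same law as a paintbox partition of $[n]$ generated by the random weights $\{P_\ell\}_{\ell\in\N}$ of \eqref{eq:Pell}, where the $J_\ell$'s enumerate a Poisson point process on $\R_+$ with intensity $\nu$. By Kingman's paintbox representation and the uniqueness of the law of an exchangeable partition, it suffices to show that in each of the two cases the (unordered) collection $\{P_\ell\}_{\ell\in\N}$ has the same distribution as the normalized frequencies of the appropriate Poisson--Dirichlet law; the paintbox partition generated by these weights is then, by definition, the $\PD(\alpha,\theta)$ partition restricted to $[n]$.

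For case (i), I would note that $\nu(dw)=\alpha w^{-\alpha-1}dw$ is the L\'evy measure of the $\alpha$-stable subordinator $(S_t)_{t\ge 0}$ for $\alpha\in(0,1)$. Viewing $\{J_\ell\}_{\ell\in\N}$ as the jumps of $S$ on $[0,1]$ and $J_* = \sum_{\ell\ge 1} J_\ell = S_1$, the ranked normalized frequencies $\{J_{(\ell)}/S_1\}_{\ell\ge 1}$ have the $\PD(\alpha,0)$ law; this is one of the textbook characterizations of $\PD(\alpha,0)$, see \citep[Chapter 4.2]{pitman06combinatorial}. Combined with Theorem \ref{thm:1}, this yields $\Pi_n \sim \PD(\alpha,0)$, which recovers Proposition \ref{prop:1}. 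For case (ii), the measure $\nu(dw)=\theta w^{-1}e^{-w}dw$ is the L\'evy measure of the Gamma subordinator with parameter $\theta$, and the analogous classical identity \citep[Chapter 4.2]{pitman06combinatorial} identifies the ranked normalized jumps with the $\PD(0,\theta)$ law; the same paintbox argument then gives $\Pi_n\sim \PD(0,\theta)$.

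The only substantive input beyond Theorem \ref{thm:1} is the invocation of these two subordinator-jump characterizations of $\PD(\alpha,0)$ and $\PD(0,\theta)$, both of which are standard, so there is essentially no obstacle: the corollary really is immediate once Theorem \ref{thm:1} has reduced the question to identifying the law of $\{P_\ell\}_{\ell\in\N}$.
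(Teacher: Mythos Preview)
Your argument is correct and matches the paper's own approach exactly: the paper states that the corollary follows immediately from Theorem~\ref{thm:1} together with the two standard identifications (from \citep[Chapter~4.2]{pitman06combinatorial}) of the normalized jumps of the $\alpha$-stable and Gamma subordinators with $\PD(\alpha,0)$ and $\PD(0,\theta)$, respectively. There is nothing to add.
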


\begin{Rem}
The paintbox argument in the proof of Theorem \ref{thm:1} applies without change to the case where \eqref{eq:maxid} is an infinite
max-i.d.~sequence indexed by $\N$.  In this case, one obtains exchangeable partitions of $\N$.  We stated Theorem \ref{thm:1} in the finite-dimensional 
setting of partitions on $[n]$ for simplicity and in order to draw connections to the exiting results on the concurrence probability (e.g.~formula \eqref{e:concurrence}).
\end{Rem}

\begin{Rem}
The class of hitting partitions arising from \eqref{eq:maxid} does not contain all exchangeable random partitions. By allowing dependence among, and/or other types 
of distributions of, $Y_{\ell,1},\dots,Y_{\ell,n}$, one could obtain other exchangeable random partitions by the same mechanism. In particular, 
it seems that  $\PD(\alpha,\theta)$ partitions for other choices of the parameters do not appear as the hitting partitions of sub-Fr\'echet max-i.d.~laws. 
\end{Rem}

\section{Distributions of sub-Fr\'echet max-i.d.~distributions}\label{sec:fdd}

So far, we have introduced a specific family of max-i.d.~distributions and shown 
that the induced hitting partitions have more explicit structure. In this section, we collect some facts 
on this family of max-i.d.~distributions. All the computations are straightforward and standard, and are hence omitted.

\begin{Prop}\label{prop:M} 
For $(\zeta_1,\dots,\zeta_n)$ as in \eqref{eq:maxid}, 
  \begin{equation}\label{eq:zeta_fdd}
  \P( \zeta_k\le x_k,\ k=1,\dots,n) = L_{J_*} \left( \sum_{k=1}^n \frac{\sigma_k}{x_k} \right), x_k\ge 0, k=1,\dots,n,
  \end{equation}
  where 
\begin{equation}\label{e:J-star-Laplace}
  L_{J_*}(t) := \esp \exp\pp{-t J_*} = \exp\pp{-\int_0^\infty (1-e^{-tx})\nu(dx)}, t>0
  \end{equation}
   is the Laplace transform of $J_*$. In particular, 
\equh\label{eq:mixture}
(\zeta_1,\dots,\zeta_n)\eqd  J_*(Y_{1,1},\dots,Y_{1,n}).
\eque
  \end{Prop}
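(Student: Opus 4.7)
The plan is to condition on the Poisson point process $\vv J$ and exploit the max-stability property \eqref{eq:max-stability} of the $1$-Fr\'echet marks, since given $\vv J$ the $Y_{\ell,k}$'s are independent across both $\ell$ and $k$.

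First, I would fix $k \in \{1,\dots,n\}$ and compute $\P(\zeta_k \le x_k \mid \vv J)$. Since $J_\ell Y_{\ell,k}$ is $1$-Fr\'echet with scale $J_\ell \sigma_k$, the identity \eqref{eq:max-stability} (applied to the sequence $\{J_\ell\}_{\ell\in\N}$, which is almost surely summable to $J_*<\infty$) yields that $\zeta_k$ is, conditionally on $\vv J$, $1$-Fr\'echet with scale $J_* \sigma_k$. Hence $\P(\zeta_k \le x_k \mid \vv J) = \exp(-J_* \sigma_k / x_k)$. Next, using the independence of $\{Y_{\ell,k}\}_{k=1,\dots,n}$ across $k$ given $\vv J$, the conditional joint distribution factorizes:
\[
\P(\zeta_k \le x_k,\ k=1,\dots,n \mid \vv J) = \exp\pp{-J_* \sum_{k=1}^n \frac{\sigma_k}{x_k}}.
\]
Taking expectations yields \eqref{eq:zeta_fdd}, where the right-hand side is recognized as $L_{J_*}(\sum_{k=1}^n \sigma_k/x_k)$.

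The formula \eqref{e:J-star-Laplace} for $L_{J_*}(t)$ is the standard Laplace functional of the Poisson integral $J_* = \sif \ell 1 J_\ell = \int_0^\infty x\, N(dx)$, where $N$ is the Poisson random measure with intensity $\nu$; it is immediate from Campbell's theorem and the assumption $\int_0^\infty (1\wedge x)\nu(dx)<\infty$ which guarantees the integral converges.

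Finally, for the mixture representation \eqref{eq:mixture}, I would compute the joint distribution of $J_*(Y_{1,1},\dots,Y_{1,n})$ the same way: conditionally on $J_*$, the components $J_* Y_{1,k}$ are independent $1$-Fr\'echet with scales $J_* \sigma_k$, so
\[
\P(J_* Y_{1,k} \le x_k,\ k=1,\dots,n \mid J_*) = \exp\pp{-J_* \sum_{k=1}^n \frac{\sigma_k}{x_k}},
\]
and averaging over $J_*$ reproduces \eqref{eq:zeta_fdd}. Since the joint distribution function determines the law, \eqref{eq:mixture} follows. There is no real obstacle here — the argument is purely a matter of conditioning and applying max-stability; the key conceptual point is simply that $1$-Fr\'echet maxima are $1$-Fr\'echet with additive scales, so the Poisson structure only enters through the single scalar $J_*$.
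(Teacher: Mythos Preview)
Your proof is correct and is precisely the natural argument; the paper in fact omits the proof entirely, stating that ``all the computations are straightforward and standard, and are hence omitted.'' Your conditioning-on-$\vv J$ approach, using the max-stability identity \eqref{eq:max-stability} coordinatewise and then the conditional independence across $k$, is exactly the computation the authors have in mind.
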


 \begin{Rem}\label{rem:Bernstein} 
Relations \eqref{eq:zeta_fdd} and
\eqref{e:max-id-spec-measure} imply that a positive max-i.d.\ random vector $\vv \zeta$ is sub-Fr\'echet with exponent measure $\nu$ as in \eqref{eq:maxid}, if and only if
$$
\mu([\vv 0, \vv x]^c) =
g_\nu \left( \sum_{k=1}^n \frac{\sigma_k}{x_k} \right), \vv x\in(0,\infty)^n,
$$
where $g_\nu(t) \equiv -\log L_{J^*}(t) = \int_0^\infty (1-e^{-tx})\nu(dx),\ t\ge 0$ is the Laplace exponent of $J^*$
as in \eqref{e:J-star-Laplace} and $\vv \sigma\in(0,\infty)^n$. Note that $g_\nu$ is a Bernstein function in general.
\end{Rem}

\begin{Rem}
A priori, it is not obvious that for what functions $L_{J_*}$ the right-hand side of \eqref{eq:zeta_fdd} defines a valid multivariate distribution function.
An alternative proof of this fact  can be obtained from the perspective of Archimedean copula \cite{mcneil09multivariate,genest18class}.
The recent work of \cite{mai18extreme} uses related ideas on the elegant properties of strongly sum infinitely divisible laws to study a class of multivariate 
max-stable laws (formulated as extreme-value copula).  
\end{Rem}
  
  \begin{Example}\label{example:logistic}
  For the two cases of Corollary \ref{c:PD}, we have accordingly the following explicit formulae.
  \begin{enumerate}[(i)]
  \item 
  $L_{J_*}(t) = e^{-t^\alpha}$, and  \eqref{eq:zeta_fdd} reads as
\[
   \P( \zeta_k\le x_k,\ k=1,\dots,n) =\exp\pp{ - \left( \sum_{k=1}^n \frac{\sigma_k}{x_k} \right)^{\alpha} }.
\]
This is essentially the $\alpha$-logistic max-stable distribution in the literature, 
which is conventionally standardized to have $1$-Fr\'echet marginals with scale parameter $1$ 
(corresponding to $(\zeta_1^\alpha,\dots,\zeta_n^\alpha)$ here). 
This is the only sub-Fr\'echet max-i.d.\ model which is also max-stable.
\item  $L_{J_*}(t) = 1/(1+t)^\theta$,
and  \eqref{eq:zeta_fdd} reads as
\[
   \P( \zeta_k\le x_k,\ k=1,\dots,n) = \pp{1+ \sum_{k=1}^n \frac{\sigma_k}{x_k} }^{-\theta},\ \ x_k\ge 0, k=1,\dots,n.
\]
  \end{enumerate}
  \end{Example}

\begin{Rem}
The sub-Fr\'echet max-i.d.~distributions can be extended immediately to max-i.d.~random sup-measures, a topic which has raised some recent interest in the 
literature \citep{obrien90stationary,molchanov16max}. Indeed, the law of the corresponding random sup-measures $\calM$ 
 is uniquely determined by its finite-dimensional distributions \citep[Theorem 11.5]{vervaat97random}, which essentially we have already computed 
 in Proposition \ref{prop:M}. 

Namely, we can define a family of max-i.d.~random sup-measures on a generic measurable space $(E,\cal E)$ equipped with a $\sigma$-finite measure $\mu$, in the form of
\[
  {\cal M}(\cdot)\equiv {\cal M}_{\mu,\nu}(\cdot) := \bigvee_{\ell= 1}^\infty J_\ell{\cal N}_\ell(\cdot),
\]
  where $\vv J\equiv \{J_\ell\}_{\ell\in\N}$ a Poisson point process with mean measure $\nu$ as before and $\{\cal N_\ell\}_{\ell\in\N}$ are i.i.d.~independently scattered $1$-Fr\'echet random sup-measures on $(E,\cal E)$ with control measure $\mu$ \citep{stoev06extremal}, independent from $\vv J$. One can show that
  \[
  \calM \eqd J^*\calN_1,
  \]
  where $J^*$ is as before. 
  Both cases in the previous example can be extended to the corresponding max-i.d.~random sup-measures. Details are omitted.

\end{Rem}

\subsection*{Acknowledgements}
We thank two anonymous referees for comments that helped improve the paper.
SS's research was 
partially supported by the NSF FRG grant DMS-1462368.
YW's research was partially supported by NSA grant H98230-16-1-0322 and Army Research Laboratory grant W911NF-17-1-0006. 

 \bibliographystyle{apalike}
\bibliography{references}

\def\cprime{$'$} \def\polhk#1{\setbox0=\hbox{#1}{\ooalign{\hidewidth
  \lower1.5ex\hbox{`}\hidewidth\crcr\unhbox0}}}
  \def\polhk#1{\setbox0=\hbox{#1}{\ooalign{\hidewidth
  \lower1.5ex\hbox{`}\hidewidth\crcr\unhbox0}}}
\begin{thebibliography}{}

\bibitem[Berestycki, 2009]{berestycki09recent}
Berestycki, N. (2009).
\newblock {\em Recent progress in coalescent theory}, volume~16 of {\em Ensaios
  Matem\'aticos [Mathematical Surveys]}.
\newblock Sociedade Brasileira de Matem\'atica, Rio de Janeiro.

\bibitem[Davydov et~al., 2008]{davydov08strictly}
Davydov, Y., Molchanov, I., and Zuyev, S. (2008).
\newblock Strictly stable distributions on convex cones.
\newblock {\em Electron. J. Probab.}, 13:no. 11, 259--321.

\bibitem[de~Haan, 1984]{dehaan84spectral}
de~Haan, L. (1984).
\newblock A spectral representation for max-stable processes.
\newblock {\em Ann. Probab.}, 12(4):1194--1204.

\bibitem[Dombry and Eyi-Minko, 2013]{dombry13regular}
Dombry, C. and Eyi-Minko, F. (2013).
\newblock Regular conditional distributions of continuous max-infinitely
  divisible random fields.
\newblock {\em Electron. J. Probab}, 18(7):1--21.

\bibitem[Dombry and Kabluchko, 2018]{dombry18random}
Dombry, C. and Kabluchko, Z. (2018).
\newblock Random tessellations associated with max-stable random fields.
\newblock {\em Bernoulli}, 24(1):30--52.

\bibitem[Dombry et~al., 2017]{dombry17probabilities}
Dombry, C., Ribatet, M., and Stoev, S. (2017).
\newblock Probabilities of concurrent extremes.
\newblock {\em Journal of the American Statistical Association}, to appear.

\bibitem[Dombry and Zott, 2018]{dombry18multivariate}
Dombry, C. and Zott, M. (2018).
\newblock Multivariate records and hitting scenarios.
\newblock {\em Extremes}, 21(2):343--361.

\bibitem[Genest et~al., 2018]{genest18class}
Genest, C., Ne\v{s}lehov\'{a}, J.~G., and Rivest, L.-P. (2018).
\newblock The class of multivariate max-id copulas with {$\ell_1$}-norm
  symmetric exponent measure.
\newblock {\em Bernoulli}, 24(4B):3751--3790.

\bibitem[Gnedin, 2007]{gnedin07chain}
Gnedin, A.~V. (2007).
\newblock The chain records.
\newblock {\em Electron. J. Probab.}, 12:no. 26, 767--786.

\bibitem[Goldie and Resnick, 1989]{goldie89records}
Goldie, C.~M. and Resnick, S. (1989).
\newblock Records in a partially ordered set.
\newblock {\em Ann. Probab.}, 17(2):678--699.

\bibitem[Hashorva and H{\"u}sler, 2005]{hashorva05multiple}
Hashorva, E. and H{\"u}sler, J. (2005).
\newblock Multiple maxima in multivariate samples.
\newblock {\em Statist. Probab. Lett.}, 75(1):11--17.

\bibitem[Mai, 2018]{mai18extreme}
Mai, J.-F. (2018).
\newblock Extreme-value copulas associated with the expected scaled maximum of
  independent random variables.
\newblock {\em J. Multivariate Anal.}, 166:50--61.

\bibitem[McNeil and Ne\v{s}lehov\'{a}, 2009]{mcneil09multivariate}
McNeil, A.~J. and Ne\v{s}lehov\'{a}, J. (2009).
\newblock Multivariate {A}rchimedean copulas, {$d$}-monotone functions and
  {$l_1$}-norm symmetric distributions.
\newblock {\em Ann. Statist.}, 37(5B):3059--3097.

\bibitem[Molchanov and Strokorb, 2016]{molchanov16max}
Molchanov, I. and Strokorb, K. (2016).
\newblock Max-stable random sup-measures with comonotonic tail dependence.
\newblock {\em Stochastic Process. Appl.}, 126(9):2835--2859.

\bibitem[O'Brien et~al., 1990]{obrien90stationary}
O'Brien, G.~L., Torfs, P. J. J.~F., and Vervaat, W. (1990).
\newblock Stationary self-similar extremal processes.
\newblock {\em Probab. Theory Related Fields}, 87(1):97--119.

\bibitem[Pitman, 2003]{pitman03poisson}
Pitman, J. (2003).
\newblock Poisson-{K}ingman partitions.
\newblock In {\em Statistics and science: a {F}estschrift for {T}erry {S}peed},
  volume~40 of {\em IMS Lecture Notes Monogr. Ser.}, pages 1--34. Inst. Math.
  Statist., Beachwood, OH.

\bibitem[Pitman, 2006]{pitman06combinatorial}
Pitman, J. (2006).
\newblock {\em Combinatorial stochastic processes}, volume 1875 of {\em Lecture
  Notes in Mathematics}.
\newblock Springer-Verlag, Berlin.
\newblock Lectures from the 32nd Summer School on Probability Theory held in
  Saint-Flour, July 7--24, 2002, With a foreword by Jean Picard.

\bibitem[Resnick, 1987]{resnick87extreme}
Resnick, S.~I. (1987).
\newblock {\em Extreme values, regular variation, and point processes},
  volume~4 of {\em Applied Probability. A Series of the Applied Probability
  Trust}.
\newblock Springer-Verlag, New York.

\bibitem[Samorodnitsky and Taqqu, 1994]{samorodnitsky94stable}
Samorodnitsky, G. and Taqqu, M.~S. (1994).
\newblock {\em Stable non-{G}aussian random processes}.
\newblock Stochastic Modeling. Chapman \& Hall, New York.
\newblock Stochastic models with infinite variance.

\bibitem[Stephenson and Tawn, 2005]{stephenson05exploiting}
Stephenson, A. and Tawn, J. (2005).
\newblock Exploiting occurrence times in likelihood inference for componentwise
  maxima.
\newblock {\em Biometrika}, 92(1):213--227.

\bibitem[Stoev and Taqqu, 2005]{stoev06extremal}
Stoev, S.~A. and Taqqu, M.~S. (2005).
\newblock Extremal stochastic integrals: a parallel between max-stable
  processes and {$\alpha$}-stable processes.
\newblock {\em Extremes}, 8(4):237--266 (2006).

\bibitem[Vervaat, 1997]{vervaat97random}
Vervaat, W. (1997).
\newblock Random upper semicontinuous functions and extremal processes.
\newblock In {\em Probability and lattices}, volume 110 of {\em CWI Tract},
  pages 1--56. Math. Centrum, Centrum Wisk. Inform., Amsterdam.

\bibitem[Wadsworth and Tawn, 2013]{wadsworth13new}
Wadsworth, J.~L. and Tawn, J.~A. (2013).
\newblock A new representation for multivariate tail probabilities.
\newblock {\em Bernoulli}, 19(5B):2689--2714.

\bibitem[Wang and Stoev, 2010]{wang10structure}
Wang, Y. and Stoev, S.~A. (2010).
\newblock On the structure and representations of max--stable processes.
\newblock {\em Adv. in Appl. Probab.}, 42(3):855--877.

\bibitem[Wang and Stoev, 2011]{wang11conditional}
Wang, Y. and Stoev, S.~A. (2011).
\newblock Conditional sampling for spectrally-discrete max-stable random
  fields.
\newblock {\em Adv. in Appl. Probab.}, 43(2):463--481.

\end{thebibliography}

\end{document}